\newtheorem{theorem}{Theorem}[section]
\newtheorem{lemma}[theorem]{Lemma}
\theoremstyle{definition}
\newtheorem{algo}[theorem]{Algorithm}
\newtheorem{algorithm}{Algorithm}[section]
\numberwithin{equation}{section}
\begin{document}
\makeatletter

\begin{center}
\large{\bf Newton's Method in Three Precisions}
\end{center}\vspace{5mm}
\begin{center}
\textsc{
C. T. Kelley\\
North Carolina State University\\
Department of Mathematics\\
Box 8205, Raleigh, NC 27695-8205, USA\\
Tim\_Kelley@ncsu.edu
}\end{center}

\vspace{2mm}

\footnotesize{
\noindent\begin{minipage}{14cm}
{\bf Abstract:}
We describe a three precision variant of Newton's method for nonlinear 
equations. We evaluate the nonlinear residual in double precision, store
the Jacobian matrix in single precision, and solve the equation for the 
Newton step with iterative refinement with a factorization in half precision.
We analyze the method as an inexact Newton method. This analysis shows that,
except for very poorly conditioned Jacobians,
the number of nonlinear iterations needed is the same that one would get if
one stored and factored the Jacobian in double precision. In many
ill-conditioned cases one can use the low precision factorization as a
preconditioner for a GMRES iteration. That approach can recover fast
convergence of the nonlinear iteration.
We present an example to illustrate the results.
\end{minipage}
 \\[5mm]
\noindent\begin{minipage}{14cm}
{\bf Dedication:}
To Masao Fukushima on his 75th birthday, with thanks for his many contributions to optimization.
\end{minipage}
}

\noindent{\bf Keywords:} {Newton's Method, Iterative Refinement, 
Mixed-Precision Algorithms}

\noindent{\bf Mathematics Subject Classification:
65H10, 
65F05, 
65F10, 
45G10, 
}

\hbox to14cm{\hrulefill}\par


\section{Introduction}

This paper is about using low precision arithmetic in the computation of
a Newton step. In many cases the cost of the factorization of the Jacobian is
$O(N^3)$, where $N$ is the number of unknowns, and this dominates the cost of
the computation. Therefore, computation of the factorization in lower precision
offers significant savings in cost.

The new algorithms in this paper
are based on prior results from \cite{ctk:fajulia, ctk:sirev20}.
In that older work we implemented Newton's method for the nonlinear equation
in two precisions. We evaluated the nonlinear residual in double precision 
and stored and factored the Jacobian in a lower precision 
(either single or half). 
The resulting algorithm is
a nonlinear version of the
classic iterative refinement method from linear algebra
\cite{higham,wilkinson63}.

In \cite{ctk:sirev20} we observed that using
half precision, while offering the potential of faster computation, 
degraded
the performance of the nonlinear iteration for ill-conditioned problems.
In this paper we suggest a way to address that issue by storing copies
of the Jacobian in both single and half precision, factoring the Jacobian
in half precision, and solving the equation for the Newton step
in single precision using the half precision factorization within 
iterative refinement (IR). We can make this process more robust by using the
GMRES-IR method of \cite{CarsonHigham1,CarsonHigham}, where the low
precision factorization is used as a preconditioner for a GMRES 
iteration for the high precision problem. 

We view this as an inexact Newton method \cite{demboes,ctk:roots} 
and the nonlinear convergence analysis will depend on the performance
of the underlying linear iterative method, which will be IR or GMRES-IR.

\subsection{Notation and basic results}
\label{subsec:notation}
We consider a nonlinear equation
\begeq
\label{eq:nleq}
\mf (\vx) = 0
\endeq
for $\vx \in \Omega \subset R^N$.
We will call $\mf$ the residual in this paper. We denote the Jacobian matrix
of $\mf$ by $\mf'$.

We will assume that the standard assumptions
\cite{dens,ctk:roots,ctk:fajulia} hold.

{\bf Standard Assumptions}
\begin{enumerate}
\item Equation~{\rm\ref{eq:nleq}} has a solution $\vx^* \in \Omega$.
\item $\mf': \Omega \to R^{N \times N}$ is Lipschitz continuous
near $\vx^*$ with Lipschitz constant $\gamma$.
\item $\mf'(\vx^*)$ is nonsingular.
\end{enumerate}

We will assume that we are near enough
to $\vx^*$ so that the Newton iteration
\begeq
\label{eq:newtonit}
\vx_{+} = \vx_c - \mf'(\vx_c)^{-1} \mf(\vx_c)
\endeq
will converge quadratically to the solution.  In \eqnok{newtonit}
$\mf'$ is the Jacobian matrix
and, as is standard, $\vx_c$ denotes the current point and
$\vx_+$ denotes the Newton iteration from $\vx_c$.

One explicit way \cite{ctk:roots}
to express this is to assume that
\begeq
\label{eq:close}
\vx_c \in \calb \equiv \{ \vx \, | \, \| \vx - \vx^* \| \le  \rho \}
\endeq
where
\[
\rho \le \frac{1}{2 \gamma \| \mf'(\vx^*)^{-1} \|}
\]
and is small enough so that $\calb \subset \Omega$. In that case
$\vx_+ \in \calb$ and 
\[
\| \ve_+ \| \le \gamma \| \mf'(\vx^*)^{-1} \| \| \ve_c \|^2
\le \| \ve_c \|/2 \le \rho/2.
\]
And so the iteration converges and remains in $\Omega$.
Here $\ve = \vx - \vx^*$ denotes the error.

In practice, however, the Newton iteration is computed in
floating point arithmetic and the floating point errors must
be considered.
To account for this (see \cite{ctk:roots} for the details)
we let $\Delta$ denote the error in the Jacobian and $\epsilon$ denote
the error in the residual. With this in mind the iteration is
\begeq
\label{eq:realit}
\vx_+ = \vx_c - (\mf'(\vx_c) + \Delta(\vx_c))^{-1} (\mf(\vx_c)
+ \epsilon(\vx_c))
\endeq

In this paper we will assume that the errors can be bounded independently of
$\vx$, so there are $\epsilon_F$ and $\epsilon_J$ such that
\[
\| \epsilon(\vx) \| \le \epsilon_F \mbox{ and }
\| \Delta(\vx) \| \le \epsilon_J
\]
for all $\vx \in \calb$. One can think
of $\epsilon_F$ as floating point roundoff. The interesting part is
$\epsilon_J$, the error in the Jacobian.

With this in mind, the error estimate from \cite{ctk:roots} becomes
\begeq
\label{eq:errest}
\| \ve_+ \| = O(\| \ve_c \|^2 + \epsilon_J \| \ve_c \|
+ \epsilon_F ),
\endeq
where $\ve = \vx - \vx^*$ denotes the error,
Clearly, if the errors vanish, one obtains the standard quadratic
convergence theory. However, if $\epsilon_F > 0$ then one can expect
the residual norms to stagnate once
\[
\| \mf(\vx) \| = O(\epsilon_F)
\]
which is what one observes in practice. The estimate \eqnok{errest}
is not a local convergence result. Results of this type are called
{\em local improvement} \cite{denwal3,ctk:roots}.

We can also see that if $\epsilon_J = O(\sqrt{\epsilon_F})$, as it will
be \cite{ctk:roots} if one uses a finite-difference approximation to the
Jacobian with difference increment $\sqrt{\epsilon_F}$ or (assuming one
computes $\mf$ in double precision) stores and factors
the Jacobian in single precision,
then \eqnok{errest} becomes
\begeq
\label{eq:errest2}
\| \ve_+ \| 
= O(\| \ve_c \|^2 + \sqrt{\epsilon_F} \| \ve_c \| + \epsilon_F )
= O(\| \ve_c \|^2 + \epsilon_F ).
\endeq
Equation \eqnok{errest2} says that the iteration with a sufficiently 
accurate approximate Jacobian is indistinguishable from Newton's
method.

With these errors in mind, we can formulate the locally
convergent (\ie with no line search) form of Newton's method of interest
in this paper. 
In Algorithm~\ref{alg:newton} $\tau_a$ and $\tau_r$ are relative and
absolute error tolerances. $\vx$ is the initial iterate on input and
the algorithm overwrites $\vx$ as the iteration progresses.

\begin{algo}
\label{alg:newton}
{$\mbox{\bf newton}(\mf, \vx, \tau_a, \tau_r)$} 
\begin{algorithmic}
\STATE Evaluate ${\tilde \mf} = \mf(\vx) + \epsilon(\vx)$; 
\STATE $\tau \leftarrow \tau_r \| {\tilde \mf} \| + \tau_a$.
\WHILE{$\| {\tilde \mf} \| > \tau$}
\STATE Solve $(\mf'(\vx) + \Delta(\vx)) \vs = - {\tilde \mf}$
\STATE $\vx \leftarrow \vx + \vs$
\STATE Evaluate ${\tilde \mf} = \mf(\vx) + \epsilon(\vx)$; 
\ENDWHILE
\end{algorithmic}
\end{algo}

There are two sources of error in the Jacobian that contribute
to $\Delta$. One is the error in approximating the Jacobian itself
and the other is the error in the solver. We will use an analytic
Jacobian in this paper and store that Jacobian in single precision.
Hence the relative error in the Jacobian is
floating point roundoff in single precision.
We used Gaussian elimination to solve for the Newton
step in \cite{ctk:sirev20} so the solver error was the backward
error in the $LU$ factorization. We will discuss this more in
\S~\ref{subsec:sirev}.

\subsection{IEEE Arithmetic}
\label{subsec:ieee}

We remind the reader of some details of IEEE floating point arithmetic
\cite{overtonbook,IEEEnew}. The standard precisions in most software
environments are single and double precision. Half precision was originally
proposed as a storage format \cite{ieee} and is not implemented in
hardware on many platforms.
We will describe the details
of these three precisions in terms of the amount of storage a floating
point number requires (the width) and the unit roundoff $u$. As is standard
\cite{higham} we define
$u$ in terms of the floating point error in rounding the result of
any binary operation $\circ = \pm, \times, \div$ applied to two floating
point numbers $x$ and $y$
\[
fl( x \circ y ) = (x \circ y) ( 1 + \delta ), \  | \delta | \le u.
\]
Here $fl$ is the rounding map which takes a real $z$ in the range
of the floating point number system to the nearest floating point number.
If $z$ is not in the range of the floating point number system, then
attempting to compute $fl(z)$ will generate an exception. The range
will be important in this paper because one must pay particular
attention to that
when computing a Newton step with a half-precision Jacobian. We define
the range of the floating point number system as
\[
{\cal R} = \{ z \ | \ \sigma_L \le | z | \le \sigma_H \}
\]
where $\sigma_L$ is the smallest positive floating point number and
$\sigma_H$ is the largest positive floating point number.

We can now summarize the properties of the three precisions in this paper.
We took the data in Table~\ref{tab:precision} from a similar table in
\cite{NickCSE}.

\begin{table}[h!]
\caption{IEEE precisions}
\label{tab:precision}
\begin{center}
\begin{tabular}{|l|l|l|l|l|l}
\hline
Precision & width (bits) & $u$ & $\sigma_L$ & $\sigma_H$ \\
\hline
Half &  $16$ & $\approx 5 \times 10^{-4}$ & $10^{-5}$ & $10^5$\\
\hline
Single &  $32$ & $\approx 6 \times 10^{-8}$ & $10^{-38}$ & $10^{38} $ \\
\hline
Double &  $64$ & $\approx 10^{-16}$ & $10^{-308}$ & $10^{308} $ \\
\hline
\end{tabular}
\end{center}
\end{table}
When we discuss multiprecision computations we will let 
$u_d, u_s, u_h$ be  unit roundoff in double, single, or half precision.

Double and single precisions have been
supported in hardware for decades. Recently new computer architectures such
as the Apple M1 and M2 chips have been offering hardware support for half
precision. However, tools such as LAPACK and the BLAS 
\cite{lapack} do not support half precision
yet and run far more slowly in half precision
than they do in double precision. There is
active research on extending
the BLAS and LAPACK to use half precision \cite{newblas}.
The algorithm we propose in this paper will exploit half
precision well once the tools catch up.

\subsection{Newton's method in two precisions}
\label{subsec:sirev}

With the background from the previous sections in hand, we can
now describe the findings from \cite{ctk:sirev20} and then motivate
the three precision algorithms.

As we said above, the equation for the Newton step
\[
\mf'(\vx_c) \vs = - \mf(\vx_c)
\]
can only be approximated. The first step in such an approximation
is to replace $\mf'(\vx_c)$ with an approximation $\mj$. For
example $\mj$ could be a floating point evaluation of the Jacobian,
perhaps in a lower precision that the one used to evaluate $\mf$,
a finite difference approximation, or a physics-based approximation
that neglects part of the Jacobian. In any case, one can analyze
the error in $\mj$ directly. 

In this work we solve the approximation
\[
\mj \vs = - \mf(\vx_c)
\]
with Gaussian elimination \cite{higham}, \ie an $LU$ factorization.
We compute an upper triangular matrix $\mU$ and a lower triangular
matrix $\ml$ that, in exact arithmetic, factors $\mj = \ml \mU$,
so the equation for the step can be solved by two triangular solves.

However, there are errors in factorization and one really computes
approximations ${\hat \ml}$ and  ${\hat \mU}$. We define
${\hat \mj} = {\hat \ml} {\hat \mU}$, so the approximate factorization
is the exact factorization for a different (hopefully nearby) problem.
Hence, the equation we actually solve for the Newton step is
\[
{\hat \mj} \vs = - \mf(\vx_c).
\]
There is a subtle point in the equation for the Newton step. The
matrix ${\hat \mj}$ may be in a different precision than $\mf$ and 
$\vs$. One must take some care with this and we return to this point
in \S~\ref{sec:IR} and \ref{sec:ipxf}.

The backward error is
\[
\delta \mj = {\hat \mj} - \mj.
\]
So the error in the Jacobian ($\epsilon_J$ in \eqnok{errest}) has
has two parts, the error in 
$\mj$ and the backward error in the factorization.

We will assume for this paper that $\mf$ is computed in double precision,
so $\epsilon_F$ is $O(u_d)$. The error one makes is storing the
Jacobian in reduced precision is 
\[
\| \mj - \mf'(\vx_c) \| \le u_J,
\]
where $u_J = O(u_s)$ or $O(u_h)$. We will make the
contribution from the backward error explicit and
reformulate \eqnok{errest} as
\[
\| \ve_+ \| =
O( \| \ve_c \|^2 + ( u_J  
+ \| \delta \mj \|) \| \ve_c \| + \epsilon_F ).
\]

The results in \cite{ctk:sirev20} show that if the Jacobian
is stored and factored in single precision and the size $N$ of
the problem is not too large,
then there is
no difference in the iteration statistics from storing
and factoring the Jacobian in double precision. So both the approximation
error and the backward error in the solver are $O(u_d)$.
However,
if the Jacobian is stored and factored in half precision, there
are differences caused by the poor accuracy of half precision,
and the nonlinear iteration can converge slowly
or even fail to converge.

The new algorithms in this paper use a half precision factorization
as part of an iterative method to compute a Newton step with a
single precision Jacobian. This approach, as we explain in 
\S~\ref{sec:3p}, requires some care.

\section{Three precision algorithms}
\label{sec:3p}

The three precision algorithms compute $\mf$ in double precision and
store the Jacobian $\mf'$ in a single precision matrix $\mj$. This means that
\begeq
\label{eq:jacerr}
\| \mf'(\vx_c) - \mj \| \le u_s \| \mf'(\vx_c) \|.
\endeq
Hence, using the terminology of \S~\ref{subsec:notation}
\begeq
\label{eq:errlist}
\epsilon_F = O(u_d) \mbox{ and } \epsilon_J = O(u_s).
\endeq

Our notation for interprecision transfers is to let 
$I_a^b$ be the transfer from precision $u_a$ to
$u_b$. If $u_a > u_b$, this promotion changes nothing
\[
I_a^b (x) = x
\]
if $x$ is in precision $u_a$. If $u_a < u_b$, then the interprecision
transfer rounds down, so
\[
\| I_a^b(x) - x \| \le u_b \| x \|.
\]
We will use these properties of interprecision transfer throughout
the remainder of the paper. We point out that when one rounds a
matrix or vector down to a lower precision, one must allocate memory
for the low precision object and that there is a cost to this.

We then round 
$\mj$ to half precision to obtain
\[
\mj_h = I_s^h (\mj)
\]
and factor $\mj_h$ in half precision to obtain
${\hat \ml} {\hat \mU}$.
We use the
half precision factorization as part of an iterative method to solve
\[
\mj \vs = - \mf(\vx_c).
\]
We terminate that iteration when 
\begeq
\label{eq:inexactJ}
\| \mj \vs + \mf(\vx_c) \| \le \eta_J \| \mf(\vx_c) \|.
\endeq

We summarize the three precision algorithm.
\begin{algo}
\label{alg:3p}
{$\mbox{\bf newton3p}(\mf, \vx, \tau_a, \tau_r, \eta_J)$}
\begin{algorithmic}
\STATE Evaluate ${\tilde \mf} = \mf(\vx) + \epsilon(\vx)$;  
\STATE $\tau \leftarrow \tau_r \| {\tilde \mf} \| + \tau_a$.
\WHILE{$\| {\tilde \mf} \| > \tau$}
\STATE Compute and store $F'(\vx)$ in single precision as $\mj$.
\STATE Store $\mj_h = I_s^h(\mj)$.
\STATE Find $\vs$ such that $\| \mj_h \vs + \mf(\vx) \| \le 
\eta_J \| \mf(\vx) \|$. 
\STATE $\vx \leftarrow \vx + \vs$
\STATE Evaluate ${\tilde \mf} = \mf(\vx) + \epsilon(\vx)$;
\ENDWHILE
\end{algorithmic}
\end{algo}
In Algorithm~\ref{alg:3p} we want to choose $\eta_J < 1$ small enough
so that the nonlinear iteration statistics are the same as those from
Newton's method itself.

Algorithm~\ref{alg:3p} looks like an inexact Newton iteration, but 
differs in that the condition on the step is \eqnok{inexactJ}
rather than the classical inexact Newton condition
\begeq
\label{eq:inexact}
\| \mf'(\vx_c) \vs + \mf(\vx_c) \| \le \eta \| \mf(\vx_c) \|.
\endeq
If we had \eqnok{inexact}, then we would get a local improvement 
estimate \cite{demboes,ctk:roots}
\begeq
\label{eq:inexloc}
\| \ve_+ \| = O(\| \ve_c \|^2 + \eta \|\ve_c\| + \epsilon_F).
\endeq
This will imply q-linear convergence of the nonlinear
iteration if $\eta$ is sufficiently small and the function
evaluation is exact ($\epsilon_F = 0$).

If we are able to show that we can chose $\eta_J$ so that \eqnok{inexact}
holds with $\eta = O(u_s)$, then, similar to the two precision case with
$\mj$ stored and factored in single precision, \eqnok{inexloc} will imply
\eqnok{errest2} and the nonlinear iteration
statistics will be the same as Newton's method with the Jacobian stored and
factored in double precision.

The use of an iterative method for the linear equation for the Newton
step means that the backward error in the 
factorization plays no role in the analysis of the nonlinear iteration.
However, that backward error
does affect the convergence of the linear iteration. We will describe
our two choices for the linear iteration in \S~\ref{sec:IR} but will
discuss the local improvement result for the
nonlinear iteration first.

\subsection{Local improvement of the nonlinear iteration}
\label{subsec:inexact}

We begin by showing that $\mj$ is nonsingular and estimating
$\| \mj^{-1} \|$. In the analysis we use the standard notation
\[
\kappa(\ma) = \| \ma \| \| \ma^{-1} \|
\]
for the condition number of a matrix $\ma$.

\begin{lemma}
\label{lem:mjok}

Assume that the standard assumptions \eqnok{close} hold and that 
\begeq
\label{eq:condok}
4 u_s \kappa(\mf'(\vx^*) ) < 1.
\endeq

Then $\mj$ is nonsingular and
\begeq
\label{eq:mjok}
\| \mj^{-1} \| \le
\frac{2 \| \mf'(\vx^*)^{-1} \|}{1 - 4 u_s \kappa(\mf'(\vx^*) )}.
\endeq
\end{lemma}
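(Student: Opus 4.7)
The plan is to treat $\mj$ as a single perturbation of $\mf'(\vx^*)$ and close the argument with the Banach (Neumann-series) perturbation lemma. First I would split
\[
\mj - \mf'(\vx^*) = (\mj - \mf'(\vx_c)) + (\mf'(\vx_c) - \mf'(\vx^*)),
\]
so that the Jacobian-storage bound \eqnok{jacerr} controls the first piece by $u_s \|\mf'(\vx_c)\|$ and Lipschitz continuity together with $\vx_c \in \calb$ controls the second by $\gamma\rho$. Using the elementary estimate $\|\mf'(\vx_c)\| \le \|\mf'(\vx^*)\| + \gamma\rho$, this gives
\[
\|\mj - \mf'(\vx^*)\| \le u_s\|\mf'(\vx^*)\| + (1+u_s)\gamma\rho.
\]

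Next I would multiply through by $\|\mf'(\vx^*)^{-1}\|$ and apply the inequality $\gamma\rho\|\mf'(\vx^*)^{-1}\| \le \tfrac{1}{2}$ from the definition of $\calb$ in \eqnok{close}, which yields
\[
\|\mf'(\vx^*)^{-1}\|\,\|\mj - \mf'(\vx^*)\| \;\le\; u_s\kappa(\mf'(\vx^*)) + \tfrac{1+u_s}{2} \;\le\; \tfrac{1}{2} + 2 u_s \kappa(\mf'(\vx^*)),
\]
where the last step uses $\kappa(\mf'(\vx^*))\ge 1$ to absorb the stray $u_s/2$ into $2u_s\kappa$. Hypothesis \eqnok{condok} forces this quantity to be strictly less than $1$, which is exactly the condition needed to invoke the Neumann series on $\mj = \mf'(\vx^*)\bigl[I + \mf'(\vx^*)^{-1}(\mj - \mf'(\vx^*))\bigr]$.

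Applying that series then delivers both the invertibility of $\mj$ and the bound
\[
\|\mj^{-1}\| \;\le\; \frac{\|\mf'(\vx^*)^{-1}\|}{1 - \tfrac{1}{2} - 2 u_s \kappa(\mf'(\vx^*))} \;=\; \frac{2\|\mf'(\vx^*)^{-1}\|}{1 - 4 u_s \kappa(\mf'(\vx^*))},
\]
which is exactly \eqnok{mjok}. The argument has no real conceptual obstacle; the only thing that needs care is constant bookkeeping. In particular, one must lump both error sources as a single perturbation of $\mf'(\vx^*)$ and place the $\kappa\ge 1$ absorption at the right spot to land on the coefficients $2$ and $4$. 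A detour through $\mf'(\vx_c)$, using $\|\mf'(\vx_c)^{-1}\|\le 2\|\mf'(\vx^*)^{-1}\|$ and $\kappa(\mf'(\vx_c))\le 3\kappa(\mf'(\vx^*))$, would yield only the weaker form $2/(1-3u_s\kappa)$ in the denominator.
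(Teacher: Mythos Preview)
Your argument is correct and lands exactly on \eqnok{mjok}. The paper, however, takes a slightly different route: instead of viewing $\mj$ as a perturbation of $\mf'(\vx^*)$, it views $\mj$ as a perturbation of $\mf'(\vx_c)$ and invokes the ready-made estimates $\|\mf'(\vx_c)\|\le 2\|\mf'(\vx^*)\|$ and $\|\mf'(\vx_c)^{-1}\|\le 2\|\mf'(\vx^*)^{-1}\|$ that follow from the standard assumptions and \eqnok{close}. That gives $\|I-\mf'(\vx_c)^{-1}\mj\|\le u_s\,\kappa(\mf'(\vx_c))\le 4u_s\,\kappa(\mf'(\vx^*))$ in one line, and the Neumann bound then produces \eqnok{mjok} directly, with no need for your $\kappa\ge 1$ absorption. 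Your version trades that citation for an explicit Lipschitz estimate and a bit more constant bookkeeping; both reach the same constants, and your route is self-contained.

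One correction to your closing remark: the ``detour through $\mf'(\vx_c)$'' is exactly what the paper does, and it yields the stated $4u_s\kappa$ in the denominator, not $3u_s\kappa$. Moreover, the bound $2\|\mf'(\vx^*)^{-1}\|/(1-3u_s\kappa)$ you describe would be \emph{tighter}, not weaker, since $1-3u_s\kappa>1-4u_s\kappa$. This side comment is therefore off, but it has no bearing on the validity of your main argument.
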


\begin{proof}

The standard assumptions and \eqnok{close} imply that
$\mf'(\vx_c)$ is nonsingular and
(see Lemma 4.3.1 from \cite{ctk:roots})
\begeq
\label{eq:twotimes}
\| \mf'(\vx_c) \| \le 2 \| \mf'(\vx^*) \|  \mbox{ and }
\| \mf'(\vx_c)^{-1} \| \le 2 \| \mf'(\vx^*)^{-1} \|.
\endeq
Hence, using \eqnok{twotimes},
\[
\| I - \mf'(\vx_c)^{-1} \mj \|
\le \| \mf'(\vx_c)^{-1} \| \| \mf'(\vx_c) - \mj \|
\le u_s \| \mf'(\vx_c)^{-1} \| \| \mf'(\vx_c) \|
\le 4 u_s \kappa(\mf'(\vx^*) ) < 1.
\]

So $\mf'(\vx_c)^{-1}$ is an approximate inverse of $\mj$. Therefore $\mj$
is nonsingular and
\[
\| \mj^{-1} \| \le 
\frac{\| \mf'(\vx_c)^{-1} \|}{1 - 4 u_s \kappa(\mf'(\vx^*) )}
\le
\frac{2 \| \mf'(\vx^*)^{-1} \|}{1 - 4 u_s \kappa(\mf'(\vx^*) )},
\]
proving the lemma.

\end{proof}

Assume the linear iterative method converges, which is not guaranteed, 
and that we terminate the linear iteration when 
\eqnok{inexactJ} holds. To prove the local improvement estimate
\eqnok{inexloc} we must connect
\eqnok{inexactJ} to the classic inexact Newton condition
\eqnok{inexact} for some $\eta < 1$. That will then imply
the estimate \eqnok{inexloc}.

We express the convergence estimates in terms of 
\begeq
\label{eq:pstar}
P^* = \frac{4 \| \kappa(\mf'(\vx^*)) \|}%
{1 - 4 u_s \kappa(\mf'(\vx^*) )}.
\endeq

\begin{lemma}
\label{lem:etaok}
Assume that the assumptions of Lemma~\ref{lem:mjok} and \eqnok{inexactJ} 
hold, that $u_s P^* < 1/2$, and that
\[
\eta_J < 1 - 2 u_s P^*.
\]
Then \eqnok{inexact} holds with
\begeq
\label{eq:etaok}
\eta \le \eta_J + (1 + \eta_J) u_s P^* < 1.
\endeq
\end{lemma}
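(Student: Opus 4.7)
My plan is to derive the classical inexact Newton residual from the one computed with $\mj$, by inserting $\pm \mj \vs$ and using the triangle inequality. Write
\[
\mf'(\vx_c) \vs + \mf(\vx_c) = (\mf'(\vx_c) - \mj)\vs + (\mj \vs + \mf(\vx_c)).
\]
The second term is controlled directly by the stopping criterion \eqref{eq:inexactJ}. The first term requires bounds on $\|\mf'(\vx_c) - \mj\|$ and on $\|\vs\|$; both are in hand via the hypotheses and Lemma~\ref{lem:mjok}.

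For the Jacobian storage error I would use \eqref{eq:jacerr} together with \eqref{eq:twotimes} (which holds under the standing closeness assumption) to obtain $\|\mf'(\vx_c) - \mj\| \le 2 u_s \|\mf'(\vx^*)\|$. To bound $\|\vs\|$, I would solve \eqref{eq:inexactJ} for $\vs$ as $\vs = -\mj^{-1}\mf(\vx_c) + \mj^{-1} r$ where $\|r\| \le \eta_J \|\mf(\vx_c)\|$, which gives $\|\vs\| \le (1+\eta_J) \|\mj^{-1}\| \|\mf(\vx_c)\|$. Applying Lemma~\ref{lem:mjok} yields
\[
\|\vs\| \le (1+\eta_J) \, \frac{2 \|\mf'(\vx^*)^{-1}\|}{1 - 4 u_s \kappa(\mf'(\vx^*))} \, \|\mf(\vx_c)\|.
\]
Multiplying by the bound on $\|\mf'(\vx_c) - \mj\|$ produces exactly the factor $(1+\eta_J) u_s P^*$ from \eqref{eq:pstar}. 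Adding the contribution $\eta_J \|\mf(\vx_c)\|$ from the second term gives \eqref{eq:inexact} with $\eta \le \eta_J + (1+\eta_J) u_s P^*$.

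It remains to check $\eta < 1$. The condition $\eta_J < 1 - 2 u_s P^*$ plus the hypothesis $u_s P^* < 1/2$ imply the slightly stronger $\eta_J < (1 - u_s P^*)/(1 + u_s P^*)$ (a short algebraic check using $-2(u_s P^*)^2 \le 0$), and the latter is equivalent to $\eta_J + (1+\eta_J) u_s P^* < 1$.

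None of the steps is delicate; the only item that requires a moment's thought is verifying that the clean-looking hypothesis $\eta_J < 1 - 2 u_s P^*$ is indeed strong enough to yield $\eta < 1$. Everything else is a careful accounting of the triangle inequality combined with the norm bounds already established in Lemma~\ref{lem:mjok} and the standard assumptions.
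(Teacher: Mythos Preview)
Your argument is correct and mirrors the paper's proof almost line for line: the same triangle-inequality split $\mf'(\vx_c)\vs+\mf(\vx_c)=(\mf'(\vx_c)-\mj)\vs+(\mj\vs+\mf(\vx_c))$, the same bound $\|\vs\|\le(1+\eta_J)\|\mj^{-1}\|\,\|\mf(\vx_c)\|$ combined with Lemma~\ref{lem:mjok}, and the same use of \eqref{eq:jacerr} with \eqref{eq:twotimes}. One small wording slip: since $1-2u_sP^*\le(1-u_sP^*)/(1+u_sP^*)$, the hypothesis $\eta_J<1-2u_sP^*$ implies the \emph{weaker} (not stronger) inequality $\eta_J<(1-u_sP^*)/(1+u_sP^*)$, which is what you need for $\eta<1$; the logic is right, only the adjective is inverted.
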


\begin{proof}

Equation \eqnok{inexactJ} implies that
\[
\| \mj^{-1} \|^{-1} \| \vs \| \le \| \mj \vs \|
\le (1 + \eta_J) \| \mf(\vx_c) \|
\]
and hence, using Lemma~\ref{lem:mjok}
\begeq
\label{eq:stepest}
\| \vs \| \le \| \mj^{-1} \| (1 + \eta_J) \| \mf(\vx_c) \| 
\le \frac{2 \| \mf'(\vx^*)^{-1} \|}{1 - 4 u_s \kappa(\mf'(\vx^*) )}
(1 + \eta_J)  \| \mf(\vx_c) \|.
\endeq

We use \eqnok{inexactJ} again to obtain
\begeq
\label{eq:inexactok}
\begin{array}{ll}
\| \mf'(\vx_c) \vs + \mf(\vx_c) \| 
& \le 
\| \mj \vs + \mf(\vx_c) \| + 
\| (\mf'(\vx_c) - \mj) \vs \| \\
\\
& \le \eta_J \| \mf(\vx_c) \| + u_s \| \mf'(\vx_c) \| \| \vs \| \\
\\
& \le \eta_J \| \mf(\vx_c) \| + 2 u_s \| \mf'(\vx^*) \| \| \vs \|.
\end{array}
\endeq

Combining \eqnok{stepest} and \eqnok{inexactok} completes the proof.

\end{proof}

Now suppose we can obtain $\eta_J = O(u_s) = O(\sqrt{u_d})$, then 
\eqnok{errest2} holds and the local improvement estimate becomes
\begeq
\label{eq:errest3}
\| \ve_+ \| = O(\| \ve_c \|^2 + u_d )
\endeq
and the iteration statistics should be the same as Newton's method.
We will see exactly this in the results in \S~\ref{sec:results}.

The assumption that $u_s P^* < 1/2$ simply says that $\mf'(\vx^*)$ is
not horribly ill-conditioned. Ill-conditioning of $\mf'(\vx^*)$ does
not appear in the local improvement estimate directly, but does affect
the convergence of the linear iteration, as we will see in the next section.

\subsection{Iterative refinement}
\label{sec:IR}

Our first choice for an iterative method will be classic iterative
refinement \cite{wilkinson63} for solving a linear system
$\ma \vu = \vb$. Consistently with the application in this paper,
we will assume that the linear system is in single precision  and that
we factor the matrix in half precision. The reader should be aware that
one must store a half precision copy of $\ma$.
The basic algorithm is

\begin{algorithm}
\label{alg:ir}
{$\mbox{\bf IR}(\ma, \vb, \vu)$}
\begin{algorithmic}
\STATE $\vr = \vb - \ma \vu$
\STATE Store $\ma_h = I_s^h(\ma)$
\STATE Factor $\ma_h$ in half precision to
obtain computed factors ${\hat \ml}$ and ${\hat \mU}$.
\WHILE{$\| \vr \|$ too large}
\STATE $\vd = {\hat \mU}^{-1} {\hat \ml}^{-1} \vr$
\STATE $\vu \leftarrow \vu + \vd$
\STATE $\vr = \vb - \ma \vu$
\ENDWHILE
\end{algorithmic}
\end{algorithm}

In Algorithm~\ref{alg:ir} $\vu$ is the initial iterate on input
and the converged solution on output. Note that we are careful to use notation
to stress that we use the computed $LU$ factors in half precision.

One can express the iteration in closed form as
\[
\vu \leftarrow (\mi - {\hat \mU}^{-1} {\hat \ml}^{-1} \ma )\vu
+ {\hat \mU}^{-1} {\hat \ml}^{-1} \vb.
\]
Hence, Algorithm~\ref{alg:ir} is a linear stationary iterative method 
with iteration matrix
\[
\mm  = (\mi - {\hat \mU}^{-1} {\hat \ml}^{-1} \ma ).
\]
So, if the half precision factorization is a sufficiently
good approximation to $\ma$, then
$\| \mm \|$ will be small and iteration will converge rapidly, at least in
exact arithmetic. In the presence of rounding errors we would only expect a
local improvement result. 

Half precision can be very inaccurate and one must be prepared for the iteration
to converge slowly or even diverge. One can show that if the low precision
factorization is a reasonably good approximation to $\ma$, then one obtains
exactly the local improvement results one would like.
One such estimate is from \cite{CarsonHigham} using the $\ell^\infty$
norm on $R^N$. 
In the case here, where $u_h^2 = u_s$, one can show convergence if
\begeq
\label{eq:carson}
3 N u_h \mbox{cond}(\ma) < 1
\endeq
where $\mbox{cond}(\ma) = \| \, | \ma^{-1} | \, |\ma| \, \|_\infty$ and
$| \ma |$ is the matrix with entries $|a_{ij}|$. In that case the iteration
will reduce the linear residual by a factor $O(u_h)$ until
\begeq
\label{eq:backok}
\| \vb - \ma \vu \| = O( u_s \| \vb \| + \| \ma \|_\infty \| \vu \|_\infty).
\endeq

Now we interpret \eqnok{backok} in terms of the inexact Newton conditions
\eqnok{inexact} and \eqnok{inexactJ}. We have $\ma = \mf'(\vx_c)$,
$\vb = \mf(\vx_c)$ and the solution $\vu$ is the Newton step $\vs$. Since
$\| \vs \| = O(\| \mf(\vx_c) \|)$, the estimate \eqnok{backok} implies
\eqnok{inexactJ} with $\eta_J = O(u_s)$.

If the matrix $\ma$ is poorly conditioned, then \eqnok{carson} can fail
and then iterative refinement may fail to converge or fail to satisfy 
\eqnok{backok}. We will see this for the ill-conditioned example
in \S~\ref{sec:results}.

Even if $\| \mm \| > 1$, the condition number of
\[
{\hat \mU}^{-1} {\hat \ml}^{-1} \mj
\]
may be small enough to motivate using a Krylov method with
the low-precision factorization
as a preconditioner.
We use the GMRES-IR approach from \cite{CarsonHigham1,CarsonHigham}
with left preconditioning. This means that we solve 
\[
{\hat \mU}^{-1} {\hat \ml}^{-1} \ma \vd = 
{\hat \mU}^{-1} {\hat \ml}^{-1} \vr
\]
with GMRES to compute the defect $\vd$ in Algorithm~\ref{alg:ir}.
The preconditioner-vector product is computed with two triangular solves.
The results in \S~\ref{sec:results} show how this approach can improve
simple IR in one ill-conditioned case.

\subsection{Interprecision Transfers}
\label{sec:ipxf}

Finally, we must discuss some important details of interprecision
transfers and mixed precision operations.

For the two-precision implementation, when we solve
\begeq
\label{eq:fly}
{\hat \mj} \vs = - \mf(\vx_c)
\endeq
for the Newton step, we need to account for the interprecision
transfers. If we do nothing, then the triangular factors are in
precision $u_J$ and $\mf$ is in double precision. 
In that case each operation in the triangular solves will promote the low
precision matrix elements to double within the CPU registers.
This is called ``interprecision transfer on the fly''.

Interprecision transfer on the fly is $O(N^2)$ work
on interprecision transfers, but can be a noticeable cost for medium
to low dimensions even though the factorization cost is $O(N^3)$ work.
A way to avoid this cost is to round $\mf$ to
precision $u_J$ before the solve. One must take care if $\vx_c$ is near
the solution because rounding down, especially in half precision, could
result in an underflow to zero \cite{highamscaling}. The remedy for this
is to scale $\mf$ to a unit vector before rounding and then reverse
the scaling after the linear solve.
With this in mind one solves 
\begeq
\label{eq:nofly}
{\hat \mj} {\hat \vs}  = - I_d^J (\mf(\vx_c)/\| \mf(\vx_c) \|)
\endeq
entirely in the lower precision. This avoids interprecision transfers
during the triangular solves. The one promotes $\hat \vs$ and
reverses the scaling
\begeq
\label{eq:noflystep}
\vs = \| \mf(\vx_c) \| I_J^d {\hat \vs}
\endeq
to obtain a step $\vs$ in precision $u_J$. Then one would update the
solution via
\[
\vx_+ = \vx_c + \vs.
\]
This is exactly what we do in our Julia codes
\cite{ctk:siamfanl,ctk:sirev20}.
The reader should know that the steps $\vs$ computed with
\eqnok{fly} and \eqnok{nofly}-\eqnok{noflystep}
are different, but the performance of
the nonlinear iteration is unlikely to change.

For the linear iterative refinement iteration, the ideas are similar.
Interprecision transfers on the fly are implicit in our discussion
in \S~\ref{sec:IR} where we view iterative refinement as a 
stationary iterative method. Just as in the nonlinear case, one can
mitigate the interprecision transfer cost by replacing the step
\[
\vd = {\hat \mU}^{-1} {\hat \ml}^{-1} \vr
\]
from Algorithm~\ref{alg:ir} with 
\[
\vd = \| \vr \| I_j^s ({\hat \mU}^{-1} {\hat \ml}^{-1} I_s^h (\vr/|\ \vr \|) ).
\]
The iteration is no longer a stationary iterative method. Instead the
iteration is
\begeq
\label{eq:cheapir}
\vu \leftarrow \vu + 
\| \vb - \ma \vu \|
I_j^s \left({\hat \mU}^{-1} {\hat \ml}^{-1} I_s^h 
\left(\frac{\vb - \ma \vu}{\| \vb - \ma \vu \|} \right) \right).
\endeq
The fixed point map
is nonlinear and, because of the interprecision transfers, not even
continuous. However two approaches to interprecision transfer give 
the same results for all but the most ill-conditioned problems.

For GMRES-IR, however, using \eqnok{cheapir} will not suffice.
One must do the triangular solves in the higher
precision, single precision in the case of this paper, and hence assume
the interprecision transfer cost. One way to mitigate this cost is to
map the half precision factorization of $\mj_h$
to single precision before the solve. The 
cost of this is storage (one more copy of $\mj$), but the on-the-fly
interprecision cost is avoided.

\section{Examples}
\label{sec:results}

In this section we compare some of the two precision
results from \cite{ctk:sirev20} with the three precision method
from this paper. Some of the results using half precision were poor because
the half precision Jacobian was a poor approximation to the Jacobian. This
problem was particularly severe for the ill-conditioned example, which we
feature in this section.
The example, taken from \cite{ctk:sirev20} is the composite mid-point
rule discretization of
the Chandrasekhar H-equation \cite{chand},
\begeq
\label{eq:heq}
\calf(H)(\mu) = H(\mu) -
\left(
1 - \frac{c}{2} \int_0^1 \frac{\mu H(\mu)}{\mu + \nu} \dnu
\right)^{-1} = 0.
\endeq
The nonlinear operator $\calf$ is defined on $C[0,1]$, the space of
continuous functions on $[0,1]$. 

We use $N$ quadrature points $\nu_j = (j - 1/2)/N$ for $1 \le j \le N$
and the rule is
\[
\int_0^1 f(\nu) \dnu \approx \frac{1}{N} \sum_{j=1}^N f(\nu_j).
\]
The discrete system is
\begeq
\label{eq:hmid}
\mf(\vx)_i \equiv
x_i - \left(
1  - \frac{c}{2N} \sum_{j=1}^N \frac{x_j \mu_i}{\mu_j + \mu_i}
\right)^{-1}
=0.
\endeq
As we explained in \cite{ctk:sirev20,ctk:fajulia,ctk:acta}, one can evaluate the
nonlinear residual with a fast Fourier transform to
in $O(N \log(N))$ work and compute an analytic Jacobian, as we did
for this paper, in $O(N^2)$ work.
Hence the dominant cost for large $N$ is the factorization
of the Jacobian.

For $c=.99$, the results in
\cite{ctk:sirev20} showed a significant difference in performance for
the two-precision algorithm between a low precision of single and one
of half (see Figure~3.2, pg 205 and Figure~3.5 pg 208 in \cite{ctk:sirev20}). 
We will reproduce some of those data in this section to make the comparison.

\subsection{Computations}
\label{subsec:compute}
The computations in this section were done in Julia 
\cite{Juliasirev}
v 1.9 on a 2023
Apple Mac Mini with an M2 Pro processor and 32GB of memory.
The M2 processor supports half precision
computing in hardware and Julia 1.9 offers support for this hardware.
However, as we said in the introduction, LAPACK and the BLAS do
not take full advantage of the half precision hardware, 
so half precision computations are slow, 
but not as slow as the ones the author did for \cite{ctk:sirev20}. 

Our implementation of iterative refinement terminates with success
when the relative
residual norm is $< 10^{-6} \approx 100 u_s$ and declares that the
iteration has failed if the residual norm increases. After failure
the algorithm returns the solution of the linear problem
with the best residual.
This approach allows the nonlinear iteration to continue. We see in
the results that failure of the linear iterative refinement iteration
can affect the convergence of the nonlinear solver. The reason for this
is that the inexact Newton condition can fail in this case and convergence
can be slower that expected.

The computations used the author's SIAMFANLEquation.jl
Julia package \cite{ctk:siamfanl, ctk:notebooknl, ctk:fajulia}.
The files for the package are located at
\url{https://github.com/ctkelley/SIAMFANLEquations.jl} and the associated
IJulia notebook can be found at
\url{https://github.com/ctkelley/NotebookSIAMFANL}.
The GitHub repository
\url{https://github.com/ctkelley/Newton3P}
contains the codes used to produce the results in this section and
instructions for reproducing those results.

Table~\ref{tab:99} presents the residual history for ten iterations from
the the ill-conditioned example from \cite{ctk:sirev20}.
The first three histories are for Newton's method with double precision
(F64), single precision (F32), and half precision (F16) Jacobians and
are the same results as those from \cite{ctk:sirev20}. The final two
columns are for IR with the Jacobian stored in single precision
and the factorization done in half precision (IR 32-16) and GMRES-IR
using the that factorization to precondition GMRES.

The Newton iteration with a half precision Jacobian converged very poorly 
for this problem. However, IR with the same half precision factorization
performs as well as Newton's method using a double precision Jacobian. 

\begin{table}[h!]
\caption{Residual Histories for Three Precisions: $N=4096$, $c=.99$}
\label{tab:99}
\begin{center}
\begin{tabular}{llllll} 
 n&        F64&        F32&        F16&   IR 32-16&      IR-GM \\ \hline 
 0 &  1.000e+00 &  1.000e+00 &  1.000e+00 &  1.000e+00 &  1.000e+00  \\ 
 1 &  2.289e-01 &  2.289e-01 &  5.065e-01 &  2.289e-01 &  2.289e-01  \\ 
 2 &  3.934e-02 &  3.934e-02 &  2.958e-01 &  3.934e-02 &  3.934e-02  \\ 
 3 &  2.737e-03 &  2.737e-03 &  1.890e-01 &  2.737e-03 &  2.737e-03  \\ 
 4 &  1.767e-05 &  1.767e-05 &  1.255e-01 &  1.767e-05 &  1.767e-05  \\ 
 5 &  7.486e-10 &  7.536e-10 &  8.518e-02 &  7.538e-10 &  7.506e-10  \\ 
 6 &            &            &  6.068e-02 &            &             \\ 
 7 &            &            &  4.240e-02 &            &             \\ 
 8 &            &            &  3.195e-02 &            &             \\ 
 9 &            &            &  2.280e-02 &            &             \\ 
10 &            &            &  1.713e-02 &            &             \\ 
\hline 
\end{tabular} 
\end{center} 
\end{table}

Table~\ref{tab:9999} is a much more poorly conditioned problem. In this
problem IR does very well for the first three iterations when the 
iteration is not close to the solution. As the iteration converges the
ill-conditioning of the Jacobian at the solution begins to cause failures
of the linear iteration and that affects the nonlinear iteration. GMRES-IR
continues to perform well.

\begin{table}[h!] 
\caption{Residual Histories for Three Precisions: $N=4096$, $c=.9999$}
\label{tab:9999}
\begin{center}
\begin{tabular}{llllll} 
 n&        F64&        F32&        F16&   IR 32-16&      IR-GM \\ \hline 
 0 &  1.000e+00 &  1.000e+00 &  1.000e+00 &  1.000e+00 &  1.000e+00  \\ 
 1 &  2.494e-01 &  2.494e-01 &  5.182e-01 &  2.494e-01 &  2.494e-01  \\ 
 2 &  6.093e-02 &  6.093e-02 &  3.123e-01 &  6.093e-02 &  6.093e-02  \\ 
 3 &  1.480e-02 &  1.480e-02 &  2.067e-01 &  1.480e-02 &  1.480e-02  \\ 
 4 &  3.454e-03 &  3.454e-03 &  1.421e-01 &  3.455e-03 &  3.454e-03  \\ 
 5 &  6.762e-04 &  6.762e-04 &  1.012e-01 &  6.766e-04 &  6.762e-04  \\ 
 6 &  7.049e-05 &  7.049e-05 &  7.552e-02 &  6.360e-04 &  7.049e-05  \\ 
 7 &  1.223e-06 &  1.223e-06 &  5.773e-02 &  5.811e-04 &  1.223e-06  \\ 
 8 &  3.947e-10 &  3.957e-10 &  4.543e-02 &  5.312e-04 &  3.952e-10  \\ 
 9 &            &            &  3.639e-02 &  4.862e-04 &             \\ 
10 &            &            &  2.949e-02 &  4.456e-04 &             \\ 
\hline 
\end{tabular}
\end{center}
\end{table}

\clearpage

\section{Acknowledgments}
This work was partially supported by
%
US Department of Energy grant DE-NA003967
and National Science Foundation Grant
DMS-1906446.

\bibliographystyle{siamplain}
\bibliography{newton3p}


\end{document}